\newtheorem{thm}{Theorem}[section]
\newtheorem{lem}[thm]{Lemma}
\newtheorem{cor}[thm]{Corollary}
\theoremstyle{definition}
\newtheorem{df}[thm]{Definition}
\theoremstyle{remark}
\newtheorem{rem}[thm]{Remark}
\newtheorem{rems}[thm]{Remarks}
\newcommand{\mg}{\marginpar}
\newcommand{\sq}{\subseteq}
\newcommand{\vS}{\varSigma}
\newcommand{\vO}{\varOmega}
\newcommand{\vT}{\varTheta}
\newcommand{\restr}{\upharpoonright}
\newcommand{\s}{\sigma}
\newcommand{\mf}{\mathfrak}
\newcommand{\vY}{\varUpsilon}
\newcommand{\E}{\mathbb{E}}
\newcommand{\T}{\mathbb{T}}
\newcommand{\vX}{\varXi}
\newcommand{\rp}{(\cnp,\csp)}
\newcommand{\cnp}{N}%{\mathcal{N}}
\newcommand{\agp}{S}%{\mathcal{S}}
\newcommand{\csp}{X}%{\mathcal{X}}
\newcommand{\cip}{W}%{\mathcal{W}}
\newcommand{\cnpw}{counting process }%{claim number process }
\newcommand{\cnpB}{counting process}%claim number process
\newcommand{\agpw}{aggregate process }%aggregate claims process }
\newcommand{\agpB}{aggregate process}%aggregate claims process
\newcommand{\cspB}{size process}%claim size process
\newcommand{\sagp}{S}%{\mathcal{S}}                                           
\def\N{{\mathbb N}}
\def\R{{\mathbb R}}
\newcommand{\auth}[1]{\textcolor{black}{\texttt{#1}}}
\newcommand{\artp}[1]{\textrm{#1}}
\newcommand{\artn}[1]{\textbf{#1}}
\newcommand{\artj}[1]{\emph{#1}}
\newcommand{\artb}[1]{\emph{#1}}
\newcommand{\Ddown}{$\bf{D\Downarrow D\Downarrow D\Downarrow D\Downarrow
D\Downarrow D\Downarrow D\Downarrow D\Downarrow D\Downarrow
D\Downarrow }$}
\newcommand{\Dup}{$\bf {D\Uparrow D\Uparrow D\Uparrow D\Uparrow D\Uparrow
D\Uparrow D\Uparrow D\Uparrow D\Uparrow D\Uparrow }$}
\newcommand{\arch}[1]{\textcolor[rgb]{0.,0,0.}{#1}}
\newcommand{\adc}[1]{\textcolor{blue}{#1}}
\title{SOME MARTINGALE CHARACTERIZATIONS OF COMPOUND MIXED POISSON PROCESSES}
\author{D.P. Lyberopoulos and N.D. Macheras}
\begin{document}
\date{\today}

%\marginpar{lm4jarXiv1}%\\\arch{D08:D.P. N.D.}\\\arch{fullpage?}}

\begin{abstract}
Some martingale characterizations of compound mixed Poisson processes are proven, extending S. Watanabe's \cite{wa} martingale characterization of Poisson processes as well as the main result of \cite{lm1}, concerning martingale characterizations of mixed Poisson processes.

\par\noindent
{\bf MSC 2010:} Primary 60G50, 91B30; secondary 28A50, 60G44. 
%\margn{\delq{ok?}}
\smallskip

\par\noindent
{\bf{Key Words}:} \textsl{regular conditional probability, compound mixed Poisson process, martingale, \cnpB, \cspB,\agpB}.

%\quad\PencilLeft
\end{abstract}
\thanks{D.P.L. would like to aknowledge that a part of this
work was conducted at the {\sl Department of Statistics and Insurance Science in the University of Piraeus}.  
D.P.L. is also indebted to the Public Benefit Foundation 
{\sl Alexander S. Onassis}, which supported this research, under the Programme 
of Scholarships for Hellenes.}

\maketitle

%----------------------------------------------------------------------INTRO-----------------------------------------------------------------------------

\section*{Introduction}\label{i}%\marginpar{i}
Mixed Poisson 
%\mgd{numbering\\intro?}
processes (MPPs for short) play an important role in many branches of applied probability, for instance in insurance mathematics and point process theory (cf. \cite{gr} for more information). In particular, structural properties of MPPs have always been of specific importance in the field on insurance mathematics, since they are widely in modeling on counting processes, especially in non-life insurance (see Albrecht \cite{alb} for a general survey). 

S.Watanabe \cite{wa} %, (1964), 
%\mgd{(year)OUT\\$\bullet$\\No}
provided in 1964 a martingale characterization of Poisson processes 
within the class of counting processes with continuous compensators, as those with deterministic compensators. O. Lundberg \cite{lu} %, (1940), 
%\mgd{past tense}
gave in 1940 a martingale characterization of MPPs with finite mean value within birth processes. D. Pfeifer \cite{pfe} %, (1987), 
and D. Pfeifer and U. Heller \cite{pfeh}, %(1987), %\delq{have given} 
proved in 1987 a variation of Lundberg's martingale characterization of MPPs within birth processes.     
B. Grigelionis \cite{gri}, Theorem 1, extended in 1998 Lundberg's martingale characterization of MPPs with arbitrary mixing distribution $U$ 
(MPP($U$) for short) within general counting processes. 

Note that the definition of a MPP involving the notion of a birth process is equivalent to that of a MPP($U$) (see \cite{lmt1}, Proposition 3.1), while each MPP with structural parameter 
%\mgd{an\\\delq{OUT? No}}
an almost surely positive random variable $\vT$ (MPP($\vT$) for short) on a probability space $(\vO,\vS,P)$ is a MPP($U$) (see \cite{lmt1}, Theorem 3.1) the inverse direction is not in general true, since it is not always possible, given a MPP($U$), to construct a $P$-almost surely non-negative random variable $\vT$ such that $P_{\vT}=U$ (see \cite{fr3}, 343M). On the other hand, assuming that there exists such 
%\mgd{$\bullet$\\to do it/so}
a random variable $\vT$, it is not in general possible to construct a regular conditional probability of $P$ over $U$ consistent with 
$\vT$, since for non-perfect measures $P$ on $\vS$ it is impossible to do it (see \cite{fa}, Theorem 4).  

To the best of our knowledge, the first result on martingale characterization of MPPs with a structural parameter $\vT$ has been given in \cite{lm1}, Theorem 4.10 along with \cite{lm1err}. In this paper we investigate some martingale characterizations for compound mixed Poisson processes (CMPPs for short). 

In Section \ref{d:cmcppdis} we recall the necessary definitions of compound mixed Poisson processes (CMPPs for short) and regular conditional probabilities (r.c.p. for short) as well as some preparatory results, concerning the reduction of a CMPP under a probability measure $P$ to ordinary compound Poisson processes under the probability measures of the corresponding regular conditional probability, proven in \cite{lm3}.

In Section \ref{m:chm} we first provide a characterization of a CMPP in terms of the martingale property of a certain transformation of 
the \agpw $\agp$ (see Theorem \ref{17}) and then we characterize a CMPP in terms of a certain transform of the \cnpw $\cnp$ (see Theorem \ref{27}). 
The latter, which is the main result of this paper, yields among others Watanabe's martingale characterization of Poisson processes, 
and reduces to the main result of \cite{lm1}, that is Theorem 4.10.

%----------------------------------------------------------------------PREL-----------------------------------------------------------------------------

\section{Preliminaries}\label{pr}%\marginpar{pr}
$\N$ and $\R$ stand for the natural and the real numbers, respectively, while $\R_+:=\{x\in\R:x\geq0\}$. If $d\in\N$, then $\R^d$ denotes the Euclidean space of dimension $d$.

Given a probability space $(\vO,\vS,P)$, a set $N\in\vS$ with $P(N)=0$ is called a $P$-{\bf null set} (or a null set for simplicity). 
For random variables $X,Y:\vO\longrightarrow\R$ we write $X=Y\;$ $P$-almost surely ($P$-a.s. for short), if 
$P(X\neq Y)=0$.

If $A\subseteq\vO$, then $A^c:=\vO\setminus A$, while $\chi_A$ denotes the indicator (or characteristic) function of the set $A$. 
For a map $f:D\longrightarrow{\R}$ and for a non-empty set $A\subseteq{D}$ we denote by $f\upharpoonright{A}$ the restriction of $f$ to $A$. 
The identity map from $\vO$ onto itself is denoted by $id_{\vO}$. 
The $\sigma$-algebra generated by a family $\mathcal{G}$ of subsets of $\vO$ 
is denoted by $\sigma(\mathcal{G})$.

%A $\sigma$-algebra $\mathcal{A}$ is {\bf countably generated} if it has a countable generator.
For any Hausdorff topology $\mf{T}$ on $\vO$, by ${\mf B}(\vO)$ we denote the {\bf Borel $\sigma$-algebra} on $\vO$, i.e. the $\sigma$-algebra generated by $\mf{T}$, while $\mf{B}:=\mf{B}(\R)$ stands for the Borel $\sigma$-algebra of subsets of $\R$. 
%By $\leb$ is denoted the restriction of the Lebesgue measure $\lambda$ to $\mf{B}$, while any restriction of $\leb$ to $\mf{B}(A)$, where $A$ is any Borel subset of $\R$, will denoted again by $\leb$. 
By $\mathcal{L}^{\ell}(P)$ we denote the space of all $\vS$-measurable real-valued functions $f$ on $\vO$ such that $\int|f|^{\ell}dP<\infty$ 
for $\ell\in\{1,2\}$.

Functions that are $P$-$\mbox{a.s.}$ equal are not identified. 
We write $\E_P[X\mid{\mathcal{G}}]$ for a version of a conditional expectation (under $P$) 
of $X\in\mathcal{L}^{1}(P)$ given a $\sigma$-subalgebra ${\mathcal{G}}$ of $\vS$.
%The probability measure $P$ is said to be {\bf perfect} if for any random variable $X$ on $\vO$ there exists a Borel set $B\subseteq X(\vO):=\{X(\omega):\;\omega\in\vO\}$ such that $P(X^{-1}(B))=1$. 

Given two probability spaces $(\vO,\vS,P)$ and $(\vY,T,Q)$ as well as a $\vS$-$T$-measurable map $X:\vO\longrightarrow\vY$ we 
write $\sigma(X):=\{X^{-1}(B): B\in T\}$ for the $\sigma$-algebra generated by $X$, while 
$\s(\{X_i\}_{i\in I}):=\s\bigl(\bigcup_{i\in{I}}\sigma(X_i)\bigr)$ stands for the $\s$-algebra generated by a family $\{X_i\}_{i\in I}$ of $\vS$-$T$-measurable maps from $\vO$ into $\vY$.
For any given $\vS$-$T$-measurable map $X$ from $\vO$ into $\vY$ the measure $P_X: T\longrightarrow\R$ is the image measure of $P$ under $X$. 
By $\mathbf{K}(\theta)$ we denote an arbitrary probability distribution on $\mf{B}$ with parameter $\theta\in\vX$. In particular, 
$\mathbf{P}(\theta)$ and $\mathbf{Exp}(\theta)$, where $\theta$ is a positive parameter, stand for the law of Poisson and exponential 
distribution, respectively (cf. e.g. \cite{sch}).

Given two real-valued random variables $X,\vT$ on $\vO$,  
a {\bf conditional distribution of $X$ over $\vT$} is a $\sigma(\vT)$-$\mathfrak{B}$-Markov kernel (see \cite{ba}, Definition 36.1 for the definition)
denoted by $P_{X\mid\vT}:=P_{X\mid\sigma(\vT)}$ and satisfying for each $B\in\mf{B}$ the equality
$P_{X\mid\vT}(\bullet,B)=P(X^{-1}(B)\mid\sigma(\vT))(\bullet)$ ${P}\restr\sigma(\vT)$-a.s.. Clearly, for every $\mathfrak{B}_d$-$\mathfrak{B}$-Markov kernel $k$, the map $K(\vT)$ from $\vO\times\mathfrak{B}$ into $[0,1]$ defined by means of
$$
K(\vT)(\omega,B):=(k(\bullet,B)\circ\vT)(\omega)
\quad\mbox{for any}\;\;(\omega,B)\in \vO\times\mathfrak{B}
$$
is a $\sigma(\vT)$-$\mathfrak{B}$-Markov kernel. Then for $\theta=\vT(\omega)$ with $\omega\in\vO$ the probability measures $k(\theta,\bullet)$ are distributions on $\mathfrak{B}$ and so we may write $\mathbf{K}(\theta)(\bullet)$ instead of $k(\theta,\bullet)$. Consequently, in this case $K(\vT)$ will be denoted by 
$\mathbf{K}(\vT)$.

For any real-valued random variables $X$, $Y$ on $\vO$ we say that $P_{X\mid\vT}$ and $P_{Y\mid\vT}$ are $P\restr\sigma(\vT)$-equivalent and we write 
$P_{X\mid\vT}=P_{Y\mid\vT}$ $P\restr\sigma(\vT)$-a.s., if there exists a $P$-null set $N\in\sigma(\vT)$ such that for any $\omega\notin N$ and 
$B\in\mf{B}$ the equality $P_{X\mid\vT}(B,\omega)=P_{Y\mid\vT}(B,\omega)$ holds true.

A family $\{{X}_i\}_{i\in I}$ of random variables 
is {\bf $P$-conditionally identically distributed} over a 
random variable $\vT$, if $P(F\cap{X}_i^{-1}(B))=P(F\cap{X}_j^{-1}(B))$ whenever $i,j\in I$, $F\in\sigma(\vT)$
and $B\in\mf{B}$. 
Furthermore, we say that 
$\{{X}_i\}_{i\in I}$ is {\bf $P$-conditionally (stochastically) independent  given}  
$\vT$, if it is conditionally independent given the $\sigma$-algebra $\sigma(\vT)$; 
for the definition of conditional independence see  e.g. \cite{ct}, page 220.

{\em From now on let $(\vO,\vS,P)$ be an arbitrary but fixed probability space. Unless it is stated otherwise, $\vT$ is a random variable on $\vO$ such that $P_{\vT}\bigl((0,\infty)\bigr)=1$, and we simply write 
``conditionally'' in the place of ``conditionally given $\vT$'' whenever conditioning refers to $\vT$}.

%------------------------------------------------------------CMPPs via RCPs-----------------------------------------------------------------------------

\section{Regular conditional probabilities and a preparatory result}\label{d:cmcppdis}

Let $\cnp:=\{N_t\}_{t\in\R_+}$ be a %{\bf \del{($P$-)} \cnpw} 
{\bf $P$- \cnpw} with %or a {\bf ($P$-) counting process}. 
exceptional $P$- null set $\vO_N$ (cf. e.g. \cite{sch}, page 17 for the definition). Without loss of generality we may and do assume that 
$\vO_N=\emptyset$. Denote by $\{T_n\}_{n\in\N_0}$ and $\cip:=\{\cip_n\}_{n\in\N}$ the {\bf (claim) arrival process} and 
{\bf (claim) interarrival process}, respectively, associated with $\cnp$ (cf. e.g. \cite{sch}, page 6 for the definitions).
Also let $\csp:=\{X_n\}_{n\in\N}$ be the {\bf (claim) size process} with all $X_n$ positive, and $\agp:=\{S_t\}_{t\in\R_+}$ the {\bf aggregate  process} induced by the counting process $\cnp$ and the size process $\csp$ (cf. e.g. \cite{sch}, page 103 for the definitions). 
For the definition of a risk process $\rp$ on $(\vO,\vS,P)$ we refer to \cite{sch}, page 127.

The \cnpw $\cnp$ is said to be a {\bf mixed Poisson process} on $(\vO,\vS,P)$ with parameter $\vT$ (or a $P$-MPP($\vT$) for short),  if it has conditionally stationary independent increments, such that
\[
P_{N_t\mid\vT}=\mathbf{P}(t\vT)\quad{P\restr\s(\vT)-\mbox{a.s.}}
\]
holds true for each $t\in(0,\infty)$.
\smallskip

In particular, if the distribution of $\vT$ is degenerate at $\theta_0>0$ (i.e. $P_{\vT}(\{\theta_0\})=1$), then $\cnp$ is a {\bf $P$-Poisson process} with parameter $\theta_0$ (or a $P$-PP($\theta_0$) for short).
\smallskip

An \agpw $\agp$ is said to be a {\bf compound mixed Poisson process} on $(\vO,\vS,P)$ with parameters $\vT$ and $P_{X_1}$ 
(or a $P$-CMPP$(\vT,P_{X_1})$ for short), if it is induced by a risk process $(\cnp, \csp)$ such that $N$ is a $P$-MPP($\vT$).
%\arch{In particular, if the distribution of $\vT$ is degenerate at $\theta_0>0$ then $\agp$ is said to be a {\bf compound Poisson process} on $(\vO,\vS,P)$ with parameters $\theta_0$ and $P_{X_1}$ (or a $P$-CPP$(\theta_0,P_{X_1})$ for short), if it is induced by a $P$-risk process $(N,X)$ 
%\mgd{$\bullet$}
%such that $\cnp$ is a $P$-PP($\theta_0$).} 
%\smallskip
%\arch{Next consider the following conditions: 
%\begin{enumerate}
%\item[\textnormal{(a1)}] The processes $\cip$ and $\csp$ are $P$-conditionally mutually independent.
%\item[\textnormal{(a2)}] The random variable $\vT$ and the sequence $\csp$ are $P$-(unconditionally) independent.
%\end{enumerate}}
%\noindent\arch{instead of}
%\noindent 

In particular, if the distribution of $\vT$ is degenerate at $\theta_0>0$ then $\agp$ is said to be a {\bf compound Poisson process} on $(\vO,\vS,P)$ with parameters $\theta_0$ and $P_{X_1}$ (or a $P$-CPP$(\theta_0,P_{X_1})$ for short), if it is induced by a $P$-risk process $(N,X)$ 
%\mgd{$\bullet$}
such that $\cnp$ is a $P$-PP($\theta_0$). 
\smallskip

The following conditions are useful for the study of CMPPs: 
\begin{enumerate}
\item[\textnormal{(a1)}] 
The processes $\cip$ and $\csp$ are $P$-conditionally mutually independent.
\item[\textnormal{(a2)}] 
The random variable $\vT$ and the sequence $\csp$ are $P$-(unconditionally) independent.
\end{enumerate}
\medskip
%The following conditions will serve as useful assumptions in the study of MCPPs: 
%\begin{description}
%\item[\textnormal{(a1)}] The processes $\cip$ and $\csp$ are $P$-conditionally mutually independent.
%\item[\textnormal{(a2)}] The random variable $\vT$ and the sequence $\csp$ are $P$-(unconditionally) independent.
%\end{description}

{\em Next, whenever condition 
\textnormal{\textnormal{(a1)} and \textnormal{(a2)}} 
holds true we shall write that the quadruplet $(P,\cip,\csp,\vT)$ 
or (if no confusion arises) the probability measure $P$ satisfies
\textnormal{(a1)} and \textnormal{(a2)}, respectively}.
\smallskip

Consider now a second arbitrary but fixed probability space $(\vY,T,Q)$. The following definition is a special instance of that in \cite{fr4}, 452E, 
appropriate for our investigation.

\begin{df}\label{dis} 
\mg{dis}%\\\adc{$\bigstar$}O.K.}
%\normalfont
A \textbf{regular conditional probability} (r.c.p. for short) \textbf{of $P$ over $Q$} is a family $\{P_{y}\}_{y\in\vY}$ of probability measures $P_y:\vS\longrightarrow\R$ such that 
\begin{enumerate}
\item[(d1)] for each $D\in\vS$ the function $P_{\cdot}(D):\vY\longrightarrow\R$ is $T$-measurable;
\item[(d2)] $\int P_{y}(D)Q(dy)=P(D)$ for each $D\in\vS$.
\end{enumerate}
We could instead use the term of {\em disintegration} instead, 
but it seems that it is better to reserve that term to the general case when
$P_y$'s may be defined on different domains (see \cite{pa}).

If $f:\vO\longrightarrow\vY$ is an inverse-measure-preserving map (i.e. $P(f^{-1}(B))=Q(B)$ for each $B\in{T}$), a r.c.p. $\{P_{y}\}_{y\in\vY}$ of $P$ over $Q$ is called \textbf{consistent} with $f$ if, for each $B\in{T}$, the equality $P_{y}(f^{-1}(B))=1$ 
holds for $Q$-almost all ($Q$-a.a. for short) $y\in B$.

We say that a r.c.p. $\{P_{y}\}_{y\in\vY}$ of $P$ over $Q$ consistent with $f$ 
is \textbf{essentially unique}, if for any other r.c.p. $\{P_{y}^{\prime}\}_{y\in\vY}$ of $P$ over $Q$ consistent with $f$ there exists a $Q$-null set $N\in{T}$ such that for any $y\notin N$ the equality $P_y=P_y^{\prime}$ holds true. 
%for every $y\in N^c$. 
\end{df}

\begin{rem}\label{magd} 
%\mg{magd}
%\normalfont
If $\vS$ is countably generated and 
$(\vO,\vS,P)$ or $P$ is perfect (see \cite{fa}, page 291 for the definition),
then there always exists a r.c.p. $\{P_{y}\}_{y\in\vY}$ of $P$ over $Q$ consistent with any inverse-measure-preserving map $f$ from $\vO$ into $\vY$ provided that $T$ is countably generated (see \cite{fa}, Theorems 6 and 3). Note that the most important applications in Probability Theory are still rooted in the case of standard Borel spaces $(\vO,\vS)$, that is, of spaces being isomorphic to $(Z,\mf{B}(Z))$, where $Z$ is some Polish space; hence of spaces satisfying always the above mentioned assumptions concerning $P$, $\vS$ and $T$. It is well-known that any Polish space is standard Borel; in particular, $\R^{d}$ and $\R^{\N}$ are such spaces. If $(\vO,\vS)$ and $(\vY,T)$ are non-empty standard Borel spaces, then there always exists an essentially unique r.c.p. $\{P_{y}\}_{y\in\vY}$ of $P$ over $Q$ consistent with any inverse-measure-preserving map $f$ from $\vO$ into $\vY$ (cf. e.g. \cite{fr4}, 452X(m)).
\end{rem}

{\em Throughout what follows we put $\vY:=(0,\infty)$ and assume that there exists a r.c.p. $\{P_{\theta}\}_{\theta\in\vY}$ of $P$ over $P_{\vT}$ 
consistent with $\vT$}.

If $\cnp$ is a $P$-MPP$(\vT)$,then the {\bf explosion} $E:=\{\sup_{n\in\N}T_n<\infty\}$ is 
a $P$-null set. In fact, by \cite{lm1}, Proposition 4.4 the \cnpw $\cnp$ is a $P_{\theta}$-PP($\theta$) for $P_{\vT}$-a.a. $\theta\in\vY$; hence 
$E$ is a $P_{\theta}$-null set for $P_{\vT}$-a.a. $\theta\in\vY$ by e.g. \cite{sch}, Corollary 2.1.5, implying that $E$ is a $P$-null set by condition 
(d2). Without loss of generality we may consider explosion equal to the empty set.

We need the following result of \cite{lm3} as 
%\mgd{$+$a}
a preparatory tool.

\begin{lem}\label{11}
%45\margn{11}
\begin{enumerate}
\item
The following conditions are equivalent:
\begin{itemize}
\item[(a)] Condition \textnormal{(a1)};
\item[(b)] $\cnp$ and $\csp$ are $P$-conditionally mutually independent;
\item[(c)] 
there exists a $P_{\vT}$-null set $G^{\prime}\in\mf{B}(\vY)$ such that for any $\theta\notin{G}^{\prime}$ the processes $\cnp$ and $\csp$ are 
$P_{\theta}$-mutually independent.
\end{itemize}
\item
Condition \textnormal{(a2)} implies that the process $\csp$ is $P$-i.i.d. if and only it is $P$-conditionally i.i.d. if and only if there exists a $P_{\vT}$-null set $G^{\prime\prime}\in\mf{B}(\vY)$ such that 
for any $\theta\notin{G}^{\prime}$ the process $\csp$ is $P_{\theta}$-i.i.d..
\item
Conditions \textnormal{(a1)} and \textnormal{(a2)} imply that the pair $(\cnp,\csp)$ is a $P$-risk process if and only if
there exists a $P_{\vT}$-null set $G_*\in\mf{B}(\vY)$ such that for any $\theta\notin{G}_*$ the pair $\rp$ 
is a $P_{\theta}$-risk process.
%\item
%Condition \textnormal{(a2)} implies that $P_{X_n}=P_{X_1}$ for any $n\in\N$.
\end{enumerate}
\end{lem}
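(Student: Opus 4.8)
The backbone of all three parts is a single disintegration identity: for every $D\in\vS$ one has
\[
\E_P[\chi_D\mid\s(\vT)](\om)=P_{\vT(\om)}(D)\qquad P\text{-a.s.}
\]
The plan is to establish this first. Measurability of $\om\mapsto P_{\vT(\om)}(D)$ with respect to $\s(\vT)$ comes from (d1). For $B\in\mf{B}(\vY)$, consistency of $\{P_\theta\}$ with $\vT$ gives $P_\theta(\vT^{-1}(B))=\chi_B(\theta)$ for $P_\vT$-a.a.\ $\theta$, hence $P_\theta(D\cap\vT^{-1}(B))=P_\theta(D)\,\chi_B(\theta)$ for $P_\vT$-a.a.\ $\theta$; feeding this into (d2) applied to $D\cap\vT^{-1}(B)$ yields $P(D\cap\vT^{-1}(B))=\int_B P_\theta(D)\,P_\vT(d\theta)=\int_{\vT^{-1}(B)}P_{\vT(\om)}(D)\,P(d\om)$, which is the defining property of the conditional expectation. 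With this identity every ``conditional-given-$\vT$'' statement about a fixed event transfers to an ``under $P_\theta$ for $P_\vT$-a.a.\ $\theta$'' statement, and conversely.

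For part (1), the equivalence (a)$\Leftrightarrow$(b) is immediate once one notes that $\s(\cip)=\s(\cnp)$, the interarrival, arrival and counting processes determining one another, so that conditional mutual independence of either pair with $\csp$ is literally the same assertion. For (b)$\Leftrightarrow$(c) I would apply the identity to the factorization $P(A\cap B\mid\s(\vT))=P(A\mid\s(\vT))\,P(B\mid\s(\vT))$ for $A\in\s(\cnp)$, $B\in\s(\csp)$, turning it into $P_\theta(A\cap B)=P_\theta(A)P_\theta(B)$ for $P_\vT$-a.a.\ $\theta$. The hard part is passing from ``for each fixed $(A,B)$, a.a.\ $\theta$'' to ``a.a.\ $\theta$, for all $(A,B)$'', i.e.\ producing the single exceptional set $G'$; here I would use that $\cnp$ and $\csp$ take values in standard Borel spaces ($\R^{\N}$), so $\s(\cnp)$ and $\s(\csp)$ are generated by countable $\pi$-systems, take the countable union of the exceptional sets over a pair of such systems, and extend the factorization by a $\pi$-$\lambda$ (Dynkin) argument.

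Part (2) splits into two halves. The equivalence ``$P$-conditionally i.i.d.\ $\Leftrightarrow$ $P_\theta$-i.i.d.\ for a.a.\ $\theta$'' is the same disintegration as in part (1), now applied to both the conditional independence and the conditional equidistribution of the coordinates, again with the countable-$\pi$-system/Dynkin upgrade to one null set $G''$. The role of (a2) is to bridge the conditional and unconditional notions: since $\vT$ and $\csp$ are $P$-independent, $P(C\mid\s(\vT))=P(C)$ $P$-a.s.\ for $C\in\s(\csp)$, whence $P_\theta(C)=P(C)$ for $P_\vT$-a.a.\ $\theta$, and the same countable-generation argument gives $P_\theta\restr\s(\csp)=P\restr\s(\csp)$ for $\theta\notin G''$. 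Because the laws on $\s(\csp)$ coincide, ``$\csp$ is $P$-i.i.d.'' and ``$\csp$ is $P_\theta$-i.i.d.\ for a.a.\ $\theta$'' are equivalent, closing the chain.

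For part (3) I would assemble the pieces, recalling that $\rp$ is a risk process exactly when $\cnp$ is a counting process, $\csp$ is i.i.d.\ positive, and $\cnp$ and $\csp$ are independent. The counting-process property is pathwise, so it transfers both ways through (d2), a $P$-null failure set being $P_\theta$-null for a.a.\ $\theta$ and conversely. Under (a1), part (1)(c) already gives $P_\theta$-independence of $\cnp$ and $\csp$ for a.a.\ $\theta$; under (a1) and (a2) together one even gets genuine $P$-independence, since for $A\in\s(\cnp)$, $C\in\s(\csp)$ one computes $P(A\cap C)=\int P(A\mid\s(\vT))P(C\mid\s(\vT))\,dP=P(C)\int P(A\mid\s(\vT))\,dP=P(A)P(C)$, using (a1) for the conditional factorization and (a2) for $P(C\mid\s(\vT))=P(C)$. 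Hence both ``$\rp$ is a $P$-risk process'' and ``$\rp$ is a $P_\theta$-risk process for a.a.\ $\theta$'' reduce to the single requirement that $\csp$ be $P$-i.i.d., which part (2) governs; taking $G_*$ to be the union of $G'$, $G''$ and the counting-process exceptional set completes the equivalence. In all three parts the genuine obstacle is the uniform-null-set passage, resolved uniformly by standard Borel countable generation.
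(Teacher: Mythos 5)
Your proof is correct. Note, however, that the paper itself contains no argument for Lemma \ref{11}: it simply refers the reader to \cite{lm3}, Lemma 2.3, so there is no in-paper proof to compare yours against. That said, the machinery you assemble --- the identity $\E_P[\chi_D\mid\s(\vT)]=P_{\vT(\cdot)}(D)$ $P$-a.s.\ extracted from (d1), (d2) and consistency, the observation that $\s(\cip)=\s(\cnp)$ to settle (a)$\Leftrightarrow$(b), the pointwise transfer of conditional factorizations into $P_\theta$-statements, the use of (a2) to identify $P_\theta\restr\s(\csp)$ with $P\restr\s(\csp)$, and the single-null-set upgrade via countable generating $\pi$-systems followed by a Dynkin-class extension --- is precisely the disintegration technique on which the cited source and the companion paper \cite{lm1} (Lemma 4.2, Proposition 4.4) are built, so you have in effect reconstructed the intended proof rather than found a genuinely different route. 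The only places needing care are ones you already handle: countable generation of $\s(\cnp)$ (via $\s(\cnp)=\s(\cip)$, or via right-continuity of paths), and the fact that the paper's standing assumption $\vO_N=\emptyset$ makes the counting-process property in part (iii) transfer trivially between $P$ and the measures $P_\theta$.
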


The proof of the above result can be found 
%\mgd{2.3,2.4}
in \cite{lm3}, Lemma 2.3. %and Proposition 2.4, respectively.

%---------------------------------------------------------CMPPs via martingales-----------------------------------------------------------------------

\section{Characterizations via martingales}\label{m:chm}%\marginpar{m:chm}

Let $\T\subseteq\R_+$ with $0\in\T$. For a process $Z_{\T}:=\{Z_t\}_{t\in\T}$ denote by $\mathcal{F}^Z_{\T}:=\{\mathcal{F}^Z_t\}_{t\in\T}$
the canonical filtration of $Z_{\T}$. For $\T=\R_+$ 
write $Z$ and $\mathcal{F}^Z$ in the place of $Z_{\R_+}$ and 
%\mgd{$\bullet$}
$\mathcal{F}^{{Z}}_{\R_+}$, respectively.
Write also $\mathcal{F}:=\{\mathcal{F}_t\}_{t\in\R_+}$, where $\mathcal{F}_t:=\sigma\bigl(\mathcal{F}^{\sagp}_t\cup\sigma(\vT)\bigr)$ for the canonical filtration of $\sagp$ and $\vT$, $\mathcal{F}_{\infty}^{\sagp}:=\sigma(\mathcal{F}^{\sagp})$ and 
$\mathcal{F}_{\infty}:=\s\bigl(\mathcal{F}^{\sagp}_{\infty}\cup\s(\vT)\bigr)$ for simplicity.

Recall that %, for given $\ell\in\{1,2\}$, 
a {\bf martingale in} $\mathcal{L}^1(P)$ {\bf adapted to the filtration} 
$\mathcal{Z}_{\T}$, or else a $\mathcal{Z}_{\T}$-{\bf martingale} in $\mathcal{L}^1(P)$, is a process $Z_{\T}:=\{Z_t\}_{t\in\T}$ of  
real-valued random variables in $\mathcal{L}^1(P)$ such that $Z_t$ is $\mathcal{Z}_t$-measurable for each $t\in\T$ and whenever $s\leq{t}$ in $\T$ and $E\in\mathcal{Z}_s$, then $\int_EZ_sdP=\int_EZ_tdP$.
The latter condition is called the {\bf martingale property} (cf. e.g. \cite{sch}, page 25).
%\mgd{$\mathcal{F}^Z$} 
For  $\mathcal{Z}_{\R_+}=\mathcal{F}$ we simply say that $Z$ is a martingale in $\mathcal{L}^1(P)$.

%\begin{comment}
\begin{rem}\label{qS1} 
%\mg{qS1}
%\normalfont
\noindent%\del{\bf(a)} 
For any $n\in\N$ the random variable $X_n$ is $\mathcal{F}_{T_n}^{\sagp}$-measurable, where 
\[
\mathcal{F}_{T_n}^{\sagp}:=\{A\in\vS: A\cap\{T_n\leq{t}\}\in\mathcal{F}_t^{\sagp}\;\;\mbox{for every}\;\; t\in\R_+\},
\]
and for any $t\in\R_+$ the random variable $X_{N_t}$ is $\mathcal{F}_t^{\sagp}$-measurable.

In fact, it follows by \cite{sch}, Lemma 2.1.2 that all random variables $T_n$ are $\mathcal{F}^{\sagp}$-stopping times. %see lm3mARCH, Rem.3.1(a)
%\mgd{$\bullet$}
Furthermore, $\agp$ is right-continuous since $\cnp$ is so. The latter together with the fact that $T_{n-1}<T_n$ for any $n\in\N$ yields that the random variables $S_{T_n}$ and $S_{T_{n-1}}$ are $\mathcal{F}_{T_n}^{\sagp}$-measurable for each $n\in\N$ (cf. e.g. \cite{ks}, Chapter 1, Propositions 2.18, 1.13 and Lemma 2.15). 
Thus, taking into account that $X_n=S_{T_n}-S_{T_{n-1}}$ since $N_{T_n}=n$ for each $n\in\N$, we deduce that $X_n$ is $\mathcal{F}_{T_n}^{\sagp}$-measurable for any $n\in\N$. 

But for all $n\in\N_0$ and $t\in\R_+$ we have $\{N_t=n\}=\{T_n\leq t<T_{n+1}\}\in\mathcal{F}_t^{\sagp}$ 
(see \cite{sch}, Lemma 2.1.2), implying that $X_{N_t}^{-1}(B)\cap\{N_t=n\}\in\mathcal{F}_t^{\sagp}$ 
for each $B\in\mf{B}(\vY)$ (see \cite{ks}, Chapter 1, Lemma 2.15). 
Consequently, the $\mathcal{F}_t^{\sagp}$-measurability of each random variable $X_{N_t}$ follows.
\end{rem}
%\end{comment}

Put $\mathcal{F}^{\cnp,\vT}:=
\{\mathcal{F}_t^{\cnp,\vT}\}_{t\in\R_+}$, where $\mathcal{F}_t^{\cnp,\vT}:=\s\bigl(\mathcal{F}_t^{\cnp}\cup\s(\vT)\bigr)$. Then
 $\mathcal{F}^{\cnp,\vT}$ is a filtration $(\vO,\vS)$. Moreover, set $\mathcal{F}_{\infty}^{\cnp}:=\s(\bigcup_{t\in\R_+}\mathcal{F}_t^{\cnp})$ and $\mathcal{F}_{\infty}^{\cnp,\vT}:=\s(\mathcal{F}_{\infty}^{\cnp}\cup\sigma(\vT))$.

%Here we should mention that since for each $t\in\R_+$ the random variable $N_t$ is ${\mathcal H}_t$-measurable, then ${\mathcal A}_t\subseteq{\mathcal H}_t$ and hence $\wt{\mathcal A}_t\subseteq\wt{\mathcal H}_t$.

\emph{Since our interest does not exceed the information generated by the \agpB, we assume throughout what follows that $\vS=\mathcal{F}_{\infty}$}.

\begin{comment}
\adc{\Ddown}

\adc{I suggest a properly modified [lm3], Lemma 3.4 INSTEAD OF Rem 3.1(a). Then Rems 3.1(b) and 3.2(d) will consist of a NEW Rem 3.1 OR 3.3. Rems 3.2 (a) till (c) SHOULD BE REPLACED BY [lm3], Remarks 3.1 (a) and (b).}
\end{comment}

\begin{rem}\label{16} 
%\mg{16}
%\normalfont

%\noindent\arch{\Ddown - SUGGESTION}

Assume that the conditions $(\mathrm{a1})$ and $(\mathrm{a2})$ are satisfied by $(P,\csp,\cip,\vT)$. 
We then get that if the \cnpw has $P$-conditionally independent (and stationary) increments, then the same applies for the \agpB.

%\noindent\arch{instead of}

%\noindent Assume that the conditions $(\mathrm{a1})$ and $(\mathrm{a2})$ are satisfied by $(P,\csp,\cip,\vT)$. We then get:
 
%{\bf (a)} 
%If the \cnpw has $P$-conditionally independent (and stationary) increments, then the same applies for the \agpB. 

%\noindent\arch{\Dup}

In fact, if $\cnp$ is a \cnpw with $P$-conditionally independent (and stationary) increments, then due to $(i)$ and $(ii)$ of \cite{lm1}, Lemma 4.2 and since $N_0=0$, this is equivalent to the fact that $\cnp$ has ${P_{\theta}}$-independent (and stationary) increments for 
%\mgd{a.a.$\times 2$}
$P_{\vT}$-a.a. $\theta\in\vY$. But since due to Lemma \ref{11}, $(iii)$ the pair $\rp$ is a risk process on $(\vO,\vS,P_{\theta})$, 
the $\agp$ having 
${P_{\theta}}$-independent (and stationary) increments for $P_{\vT}$-a.a. $\theta\in\vY$ is implied (cf. e.g. \cite{sch}, Theorem 5.1.2 
and 5.1.3). The latter, taking into account again $(i)$ and $(ii)$ of \cite{lm1}, Lemma 4.2, together with the fact that $S_0=0$, is in its own turn equivalent to the fact that $\agp$ has $P$-conditionally independent (and stationary) increments.

%--------------------------------------------------------------------------------------------------------------------------------------------------
\begin{comment}
{\bf (b)} It can be easily proven (by following similar arguments with the proof of Wald's identities) that if $X_1\in{\mathcal L}^{1}(P)$ then for each $s,t\in\R_+$ with $s\leq t$ the following condition holds true: 
$\E_P[S_t-S_s\mid\vT]=\E_P[N_t-N_s\mid\vT]\E_P[X_1\mid\vT]\quad P \restr\s(\vT)-\mbox{a.s.}$
\end{comment}
\end{rem}

Next we provide two lemmas which will turn to be useful for the proof of the main result of this section (see Theorem \ref{27}).

\begin{lem}\label{25}
%\mg{25}
%\noindent\arch{\Ddown - SUGGESTION}
Let $\agp$ be an \agpw induced by a risk process $(\cnp,\csp)$. If conditions \textnormal{(a1)} and \textnormal{(a2)} 
are satisfied by $(P,\csp,\cip,\vT)$ and the random variable $X_1$ is $P$-integrable, then 
%Assume that the \agpw $\agp$ is induced by the \adc{risk process $(\cnp,\csp)$}, the random variable $X_1$ is $P$-integrable, 
%and conditions \textnormal{(a1)} and \textnormal{(a2)} are satisfied by $(P,\csp,\cip,\vT)$. Then 
%\mgd{no need\\for $u\leq t$ here\\\adc{No. It must stay}\\\adc{as it was}}
for each $u,t\in\R_+$ with $u\leq t$ and for each $A\in\mathcal{F}_u^{\cnp,\vT}$ 
the equality
\[
\int_AS_tdP_{\theta}=\int_AN_t\E_{P_{\theta}}[X_1]dP_{\theta}
\] 
holds true 
%\mgd{a.a.}
for $P_{\vT}$-a.a. $\theta\in\vY$.
\end{lem}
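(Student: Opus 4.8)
The plan is to fix a ``good'' value of the parameter $\theta$, transfer all the hypotheses from $P$ to the fibre measure $P_\theta$ by means of Lemma \ref{11}, and then run a conditional form of Wald's identity on each fibre. Because every claim size is positive, all the quantities involved are non-negative, so every interchange of summation and integration below is justified by Tonelli's theorem and no preliminary integrability check is needed.

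First I would collect a single $P_{\vT}$-null set $G\in\mf{B}(\vY)$ off which the required structure is available. By Lemma \ref{11}$(i)$ condition (a1) gives that $\cnp$ and $\csp$ are $P_\theta$-mutually independent outside a $P_{\vT}$-null set; since $\agp$ is induced by a $P$-risk process $(\cnp,\csp)$, the size process $\csp$ is $P$-i.i.d., so Lemma \ref{11}$(ii)$ together with (a2) yields that $\csp$ is $P_\theta$-i.i.d. outside a further null set; and by \cite{lm1}, Proposition 4.4, $\cnp$ is a $P_\theta$-PP($\theta$), whence $N_t\in\mathcal{L}^1(P_\theta)$ off a null set. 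Finally, integrating $X_1\geq0$ against the r.c.p. and using (d2) gives $\int\E_{P_\theta}[X_1]\,P_{\vT}(d\theta)=\E_P[X_1]<\infty$, so $\E_{P_\theta}[X_1]<\infty$ off a null set as well. Let $G$ be the union of these exceptional sets and fix $\theta\notin G$.

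The pivotal remark is that, as the r.c.p. $\{P_\theta\}_{\theta\in\vY}$ is consistent with $\vT$, one has $\vT=\theta$ $P_\theta$-a.s., so $\sigma(\vT)$ is $P_\theta$-trivial; hence $\mathcal{F}_\infty^{\cnp,\vT}$ coincides, modulo $P_\theta$-null sets, with $\sigma(\cnp)$, and the $P_\theta$-independence of $\csp$ from $\cnp$ upgrades to the $P_\theta$-independence of each $X_n$ from the whole of $\mathcal{F}_\infty^{\cnp,\vT}$. Fixing $u\leq t$ and $A\in\mathcal{F}_u^{\cnp,\vT}\subseteq\mathcal{F}_\infty^{\cnp,\vT}$, I would write $S_t=\sum_{n=1}^\infty X_n\chi_{\{N_t\geq n\}}$ and compute
\[
\int_A S_t\,dP_\theta=\sum_{n=1}^\infty\int_A X_n\chi_{\{N_t\geq n\}}\,dP_\theta=\sum_{n=1}^\infty\E_{P_\theta}[X_n]\,P_\theta\bigl(A\cap\{N_t\geq n\}\bigr),
\]
the last equality using that $A\cap\{N_t\geq n\}\in\mathcal{F}_\infty^{\cnp,\vT}$ is $P_\theta$-independent of $X_n$. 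Since $\csp$ is $P_\theta$-i.i.d. I may replace $\E_{P_\theta}[X_n]$ by the constant $\E_{P_\theta}[X_1]$ and factor it out; a further application of Tonelli together with $\sum_{n\geq1}\chi_{\{N_t\geq n\}}=N_t$ then gives
\[
\int_A S_t\,dP_\theta=\E_{P_\theta}[X_1]\sum_{n=1}^\infty P_\theta\bigl(A\cap\{N_t\geq n\}\bigr)=\E_{P_\theta}[X_1]\int_A N_t\,dP_\theta=\int_A N_t\,\E_{P_\theta}[X_1]\,dP_\theta,
\]
which is the asserted identity; finiteness of both sides follows from $\E_{P_\theta}[N_t]<\infty$ and $\E_{P_\theta}[X_1]<\infty$.

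The step I expect to be the main obstacle is the factorization $\int_A X_n\chi_{\{N_t\geq n\}}\,dP_\theta=\E_{P_\theta}[X_n]\,P_\theta(A\cap\{N_t\geq n\})$, since it demands the independence of $X_n$ not merely from $\sigma(\cnp)$ but from $\sigma(\cnp)$ enlarged by $\sigma(\vT)$ — exactly the $\sigma$-algebra underlying $\mathcal{F}_u^{\cnp,\vT}$. This is precisely where consistency of the r.c.p. with $\vT$ is indispensable: it collapses $\sigma(\vT)$ to a $P_\theta$-trivial family, so that adjoining it to $\sigma(\cnp)$ does not disturb the independence supplied by Lemma \ref{11}$(i)$. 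Everything else is routine bookkeeping with non-negative series.
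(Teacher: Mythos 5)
Your proof is correct and follows essentially the same route as the paper's: transfer conditions (a1), (a2) and the risk-process structure to the fibre measures $P_\theta$ via Lemma \ref{11}, then run a Wald-type computation justified by Tonelli's theorem for non-negative terms; the only cosmetic difference is that the paper uses the partition $S_t=\sum_{n=0}^\infty\chi_{\{N_t=n\}}\sum_{k=1}^n X_k$ where you use the layer decomposition $S_t=\sum_{n=1}^\infty X_n\chi_{\{N_t\ge n\}}$, and your explicit observation that consistency of the r.c.p.\ with $\vT$ makes $\sigma(\vT)$ $P_\theta$-trivial (so that $P_\theta$-independence from $\sigma(\cnp)$ upgrades to independence from $\mathcal{F}_\infty^{\cnp,\vT}$) is a point the paper's proof uses but leaves implicit. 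One blemish: your appeal to \cite{lm1}, Proposition 4.4 to conclude that $\cnp$ is a $P_\theta$-PP($\theta$), hence $N_t\in\mathcal{L}^1(P_\theta)$, is unjustified, since that proposition presupposes that $\cnp$ is a $P$-MPP($\vT$), which is not among the hypotheses of this lemma. This is harmless, however, because the asserted equality is between integrals of non-negative quantities and so holds in $[0,\infty]$; the finiteness of $\E_{P_\theta}[N_t]$ is never needed, and indeed neither your computation nor the paper's uses it.
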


\begin{proof}
First notice that $(\mathrm{a1})$ together with Lemma \ref{11}, $(i)$, imply that $\cnp$ and $\csp$ are $P_{\theta}$-independent for 
%\mgd{a.a.\\$\bullet$}
$P_{\vT}$-a.a. $\theta\in\vY$. Then, taking also into account the $P$ - integrabillity of $X_1$, condition (a2) and Lemma \ref{11}, $(iii)$, %\del{and Lemma \ref{11}, $(iv)$,} 
%\mgd{no need\\for $u\leq t$ here\\\adc{No. It must stay}\\\adc{as it was}}
for each $u,t\in\R_+$ with $u\leq t$ and for each $A\in\mathcal{F}_u^{\cnp,\vT}$ and for $P_{\vT}$-a.a. $\theta\in\vY$ we have
\begin{eqnarray*}
\int_A S_tdP_{\theta} &=& \int_A\sum_{n=0}^{\infty}\chi_{\{N_t=n\}}\sum_{k=1}^nX_kdP_{\theta}
=\sum_{n=0}^{\infty}\int_A\chi_{\{N_t=n\}}\sum_{k=1}^nX_kdP_{\theta}\\
&=& \sum_{n=0}^{\infty}\int\sum_{k=1}^n\chi_{A\cap\{N_t=n\}}X_kdP_{\theta}	
=\sum_{n=0}^{\infty}\sum_{k=1}^n\E_{P_{\theta}}[\chi_{A\cap\{N_t=n\}}X_k]\\
&=&
\sum_{n=0}^{\infty}\sum_{k=1}^n\E_{P_{\theta}}[\chi_{A\cap\{N_t=n\}}]\E_{P_{\theta}}[X_k]
=\sum_{n=0}^{\infty}n\E_{P_{\theta}}[\chi_{A\cap\{N_t=n\}}]\E_{P_{\theta}}[X_1]\\
%&\stackrel{(\mathrm{R2})}{=}&
%\E_P[X_1]\cdot\sum_{n=0}^{\infty}n\E_{P_{\theta}}[\chi_{A\cap\{N_t=n\}}]
&=& \sum_{n=0}^{\infty}n\int_A\chi_{\{N_t=n\}}\E_{P_{\theta}}[X_1]dP_{\theta}
=\int_A\sum_{n=0}^{\infty}n\chi_{\{N_t=n\}}\E_{P_{\theta}}[X_1]dP_{\theta}\\
&=& \int_AN_t\E_{P_{\theta}}[X_1]dP_{\theta},
\end{eqnarray*}
since $N_t=\sum_{n=0}^{\infty}n\chi_{\{N_t=n\}}$ for each $t\in\R_+$.
\end{proof}

\begin{lem}\label{26}
%\mg{26}
Let $\agp$ be an \agpw induced by a risk process $(\cnp,\csp)$ and assume that 
conditions $(\mathrm{a1})$ and $(\mathrm{a2})$ are satisfied by $(P,\csp,\cip,\vT)$, and that 
the random variables $X_1$ and $\vT$ are $P$-integrable. 

\noindent If the process $\{S_t-t\vT\E_P[X_1]\}_{t\in\R_+}$ is a martingale in $\mathcal{L}^1(P)$, then the process $\{N_t-t\vT\}_{t\in\R_+}$ is a $\mathcal{F}^{\cnp,\vT}$-martingale in $\mathcal{L}^1(P)$.
\end{lem}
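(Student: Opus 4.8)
The plan is to pass to the regular conditional probabilities $\{P_\theta\}_{\theta\in\vY}$, where Lemma \ref{25} trades integrals of $S_t$ for integrals of $N_t$, and then to integrate back against $P_\vT$ by means of (d2); the hypothesis supplies the relation between $\int_E S_t\,dP$ and $\int_E S_s\,dP$, and cancelling the factor $\E_P[X_1]$ produces the martingale relation for $\{N_t-t\vT\}$. I would first record two facts. (i) Condition (a2) together with (d2) forces the $P_\theta$-law of $X_1$ to agree with its $P$-law for $P_\vT$-a.a. $\theta$: for $B\in\mf B$ and $C\in\mf B(\vY)$ the independence of $\vT$ and $\csp$ gives $P(X_1^{-1}(B)\cap\vT^{-1}(C))=P(X_1\in B)\,P_\vT(C)$, while consistency of $\{P_\theta\}$ with $\vT$ turns the disintegration of the same quantity into $\int_C P_\theta(X_1\in B)\,dP_\vT$; comparing these and letting $B$ run through a countable generator of $\mf B$ yields $P_{\theta,X_1}=P_{X_1}$, whence
\[
\E_{P_\theta}[X_1]=\E_P[X_1]=:c\qquad P_\vT\text{-a.a. }\theta,
\]
with $0<c<\infty$ since $X_1>0$ and $X_1\in\mathcal L^1(P)$. (ii) By Remark \ref{qS1} each $N_t$ is $\mathcal F_t^{\sagp}$-measurable, so $\mathcal F_t^{\cnp}\sq\mathcal F_t^{\sagp}$ and hence $\mathcal F_t^{\cnp,\vT}\sq\mathcal F_t$; in particular $N_t-t\vT$ is $\mathcal F_t^{\cnp,\vT}$-measurable.

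\textbf{Integrability.} Since $S_t\ge0$ and, by hypothesis, $S_t=(S_t-t\vT\E_P[X_1])+t\vT\E_P[X_1]\in\mathcal L^1(P)$, I get $\E_{P_\theta}[S_t]<\infty$ for $P_\vT$-a.a. $\theta$. Applying Lemma \ref{25} with $A=\vO$ and using $\E_{P_\theta}[X_1]=c$ gives $\E_{P_\theta}[N_t]=\E_{P_\theta}[S_t]/c<\infty$ for a.a. $\theta$; integrating against $P_\vT$ (via (d2) applied to the nonnegative functions $S_t$ and $N_t$) yields $\E_P[N_t]=\E_P[S_t]/c<\infty$. As $\vT\in\mathcal L^1(P)$, this shows $N_t-t\vT\in\mathcal L^1(P)$ for every $t$.

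\textbf{Martingale property.} Fix $s\le t$ and $E\in\mathcal F_s^{\cnp,\vT}$. Applying Lemma \ref{25} with $u=s$ at both times $t$ and $s$, and using $\E_{P_\theta}[X_1]=c$, gives $\int_E S_r\,dP_\theta=c\int_E N_r\,dP_\theta$ for $r\in\{s,t\}$ and $P_\vT$-a.a. $\theta$ (discarding the finitely many exceptional null sets). Integrating against $P_\vT$ by (d2) (the relevant functions being $P$-integrable by the previous step) gives $\int_E S_r\,dP=c\int_E N_r\,dP$ for $r\in\{s,t\}$. On the other hand, since $\mathcal F_s^{\cnp,\vT}\sq\mathcal F_s$ and $\{S_t-t\vT\E_P[X_1]\}$ is an $\mathcal F$-martingale, its martingale property over $E$ reads
\[
\int_E S_t\,dP-\int_E S_s\,dP=(t-s)\,c\int_E\vT\,dP .
\]
Substituting $\int_E S_r\,dP=c\int_E N_r\,dP$ and cancelling $c>0$ gives $\int_E N_t\,dP-\int_E N_s\,dP=(t-s)\int_E\vT\,dP$, that is $\int_E(N_s-s\vT)\,dP=\int_E(N_t-t\vT)\,dP$, which is exactly the $\mathcal F^{\cnp,\vT}$-martingale property.

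\textbf{Main obstacle.} The crux is the two-fold handling of the disintegration: establishing the constancy $\E_{P_\theta}[X_1]=\E_P[X_1]$, which is what reconciles the \emph{unconditional} normalisation $\E_P[X_1]$ of the statement with the \emph{conditional} factor $\E_{P_\theta}[X_1]$ produced by Lemma \ref{25}, and justifying the passage from the $\theta$-pointwise identities to their $P_\vT$-integrated forms, for which the integrability secured above is precisely what is needed. The inclusion $\mathcal F_s^{\cnp,\vT}\sq\mathcal F_s$, needed to apply the given martingale property on $E$, is a minor but essential additional point.
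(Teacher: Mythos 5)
Your proof is correct, and it takes a genuinely more self-contained route than the paper's in the key step. The skeleton is shared: both arguments use Lemma \ref{25} to trade integrals of $S$ for integrals of $N\,\E_{P_{\theta}}[X_1]$, both need the constancy $\E_{P_{\theta}}[X_1]=\E_P[X_1]$ for $P_{\vT}$-a.a. $\theta$, and both exploit the inclusion $\mathcal{F}_u^{\cnp,\vT}\sq\mathcal{F}_u$. The difference is in how the martingale property is transferred. The paper first descends to the conditional level: it invokes \cite{lm1}, Lemma 4.6 (with \cite{lm1err}) to turn the $P$-martingale $\{S_t-t\vT\E_P[X_1]\}$ into $P_{\theta}$-martingales $\{S_t-t\theta\E_{P_{\theta}}[X_1]\}$ for $P_{\vT}$-a.a.\ $\theta$, applies Lemma \ref{25} there to get the $P_{\theta}$-martingale property of $\{N_t-t\theta\}$, and then relies (implicitly, via the converse direction of that same equivalence lemma) on climbing back up to $P$. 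You instead never leave the measure $P$: Lemma \ref{25} together with property (d2) --- extended in the standard way from sets to nonnegative functions --- serves only to establish the bridging identity $\int_E S_r\,dP=\E_P[X_1]\int_E N_r\,dP$ for $E\in\mathcal{F}_s^{\cnp,\vT}$, after which the conclusion is pure cancellation of the factor $\E_P[X_1]>0$ in the $P$-martingale identity. This buys two things: it avoids the external citation entirely, and it makes explicit the re-integration step that the paper's proof of Claim 2 leaves tacit (its displayed conclusion is the $P_{\theta}$-statement, while the claim is about $P$). You also derive $\E_{P_{\theta}}[X_1]=\E_P[X_1]$ directly from (a2) and consistency of the r.c.p.\ (a countable-generator plus Radon--Nikodym-uniqueness argument), where the paper cites \cite{lm3}; and your integrability step ($S_t\in\mathcal{L}^1(P)$ read off from the hypothesis, then $\E_P[N_t]=\E_P[S_t]/\E_P[X_1]$ via Lemma \ref{25} with $A=\vO$ and (d2)) replaces the paper's appeal to the martingale property at $u=0$ combined with a Wald-type identity. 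The trade-off is that the paper's route fits the uniform machinery it reuses elsewhere (the same equivalence lemma drives Theorem \ref{27}, $(iv)\Longleftrightarrow(v)$), whereas yours is the more elementary argument for this particular lemma.
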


\begin{proof}
Clearly the process $\{N_t-t\vT\}_{t\in\R_+}$ is adapted to the filtration $\mathcal{F}^{\cnp,\vT}$. Assume now that $\{S_t-t\vT\E_P[X_1]\}_{t\in\R_+}$ is a martingale in $\mathcal{L}^1(P)$.

\begin{it}
Claim 1. For each $t\in\R_+$ the random variable $N_t-t\vT$ is $P$-integrable.
\end{it}

{\em Proof.} %\begin{proof}
%\mgd{$\bullet$}
It follows by the martingale property of $\{S_t-t\vT\E_P[X_1]\}_{t\in\R_+}$ that for any $u,t\in\R_+$ with $u\leq{t}$ we have 
\[
\E_P\bigl[S_t-t\vT\E_P[X_1]\bigr]=\E_P\bigl[S_u-u\vT\E_P[X_1]\bigr] %\;\; P\restr\s(\vT)-\mbox{a.s.},
\]
implying for $u=0$ that 
%\mg{clm1}
\begin{equation}\label{clm1}
\E_P[S_t]=t\E_P[\vT]\E_P[X_1].
\end{equation} 
Fix on arbitrary $t\in\R_+$. We then get 
\[
\E_P[N_t]\E_P[X_1]=t\E_P[\vT]\E_P[X_1];
\]
hence taking into account the $P$-integrability of $X_1$, we obtain that $\E_P[N_t]=t\E_P[\vT]$, implying that $N_t$ is $P$-integrable, since $\vT$ is so. Thus, $N_t-t\vT\in\mathcal{L}^1(P)$.$\hfill\Box$ 

%the $P$-integrability of %$\E_P[X_1\mid\vT]$ (which follows by that of $X_1$ together with that of $\vT$, which is implied by the fact that $\E_P\bigl[S_t-t\vT\E_P[X_1]\bigr]=0$ due to (\ref{clm1}), we obtain that $N_t$ is $P$-integrable. Thus, $N_t-t\vT\in\mathcal{L}^1(P)$.\;\;\arch{$\Box$} 
%\noindent\arch{instead of}
%\begin{equation}\label{clm01}
%\adc{\E_P[S_t\mid\vT]=t\vT\E_P[X_1]\;\;P\restr\s(\vT)-\mbox{a.s.}.}
%\end{equation} 
%Fix on arbitrary $t\in\R_+$. We then get 
%\[
%\adc{\E_P[N_t\mid\vT]\E_P[X_1]=t\vT\E_P[X_1]\;\;P\restr\s(\vT)-\mbox{a.s.};}
%\]
%hence taking into account the $P$-integrability of %$\E_P[X_1\mid\vT]$ (which follows by that of $X_1$), we obtain $\E_P[N_t\mid\vT]=t\vT\;\; P\restr\s(\vT)$-a.s., implying that $N_t$ is $P$-integrable, since $\vT$ is so. 

%\noindent\arch{\Dup}
%\end{proof}

\begin{it}
Claim 2. The process $\{N_t-t\vT\}_{t\in\R_+}$ satisfies the martingale property.
\end{it}

{\em Proof.} First
%\mgd{\delq{note...$R_{\vT}$\\OUT\\,$t\in\R_+$}}
%\delq{note that \cite{lm1}, Lemma 3.5, $(i)$ together with the $P$ integrability of the random variables $N_t$ and $S_t$\arch{, $t\in\R_+$,} implies that (\ref{clm1}) is equivalent to
%\mg{26b}
%\begin{equation}\label{26b}
%\E_{P_{\theta}}[S_t]=\E_{P_{\theta}}[N_t]\E_{P_{\theta}}[X_1]=t\theta\E_{P_{\theta}}[X_1]
%\end{equation} 
%for all $t\in\R_+$ 
%\mgd{a.a.$\times 2$\\$\bullet\times 2$}
%and for $P_{\vT}$-\arch{a.a.} $\theta\in R_{\vT}$.}
fix on arbitrary $u,t\in\R_+$ with $u\leq t$. Since $\{S_t-t\vT\E_P[X_1]\}_{t\in\R_+}$ is a martingale in $\mathcal{L}^1(P)$, applying \cite{lm1}, Lemma 4.6 together with \cite{lm1err}, we get that for $P_{\vT}$-a.a. $\theta\in\vY$ 
the process $\{S_t-t\theta\E_{P_{\theta}}[X_1]\}_{t\in\R_+}$ is a martingale in $\mathcal{L}^1(P_{\theta})$; hence
\[
\int_B(S_t-t\theta\E_{P_{\theta}}[X_1])dP_{\theta}=\int_B(S_u-u\theta\E_{P_{\theta}}[X_1])dP_{\theta}
\quad\mbox{for each}\quad B\in\mathcal{F}_u.
\]
Let us fix now on arbitrary $A\in\mathcal{F}_u^{\cnp,\vT}$. Then the last condition along with the inclusion $\mathcal{F}_u^{\cnp,\vT}\sq\mathcal{F}_u$ implies 
%\mgd{$\bullet\times 2$\\for $P_{\vT}$-a.a.}
that
\[
\int_A(S_t-t\theta\E_{P_{\theta}}[X_1])dP_{\theta}=\int_A(S_u-u\theta\E_{P_{\theta}}[X_1])dP_{\theta}
\quad\mbox{for $P_{\vT}$-a.a. $\theta\in\vY$}.
\]
The 
%\mgd{\delq{omit}\arch{add}}
latter together with Lemma \ref{25} equivalently 
%\mgd{$\bullet$,}
yields that
\[
\int_A(N_t-t\theta)\E_{P_{\theta}}[X_1]dP_{\theta}
=\int_A(N_u-u\theta)\E_{P_{\theta}}[X_1]dP_{\theta}\;\mbox{for $P_{\vT}$-a.a. $\theta\in\vY$}, %\theta\in R_{\vT},
\]
equivalently
\[
\int_A(N_t-t\theta)dP_{\theta}
=\int_A(N_u-u\theta)dP_{\theta}\;\;\mbox{for}\;\; P_{\vT}-\mbox{a.a.}\; \theta\in\vY, 
\]
since $\E_{P_{\theta}}[X_1]=\E_P[X_1]$ for $P_{\vT}$-a.a. $\theta\in\vY$ by \cite{lm3}, first equality of condition (4.1) in %the 
step (b) of the proof of Proposition 4.4. Consequently, the process $\{N_t-t\theta\}_{t\in\R_+}$ satisfies the martingale property. This completes the proof of Claim 2 as well as the whole proof.
\end{proof}

\begin{thm}\label{17} 
%\mg{17}
%\noindent\arch{\Ddown}
%\noindent\arch{
Let $\agp$ be an \agpw induced by a risk process $(\cnp,\csp)$ and assume that 
conditions $(\mathrm{a1})$ and $(\mathrm{a2})$ are satisfied by $(P,\csp,\cip,\vT)$, and that 
the random variables $X_1$ and $\vT$ are $P$-integrable. 

\noindent Then the following are equivalent:
\begin{enumerate}
\item 
The \agpw $\agp$ is a $P$-CMPP($\vT,P_{X_1}$);
\item
the process $\{S_t-t\vT\E_P[X_1]\}_{t\in\R_+}$ is a martingale in $\mathcal{L}^1(P)$.
\end{enumerate}
\end{thm}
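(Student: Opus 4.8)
The plan is to establish the two implications separately, in both cases passing through the regular conditional probability $\{P_{\theta}\}_{\theta\in\vY}$ in order to reduce the problem to ordinary (compound) Poisson processes, for which the relevant statements are either classical or already available as the main result of \cite{lm1}. Throughout, the required integrability is guaranteed by $\E_P[S_t]=t\,\E_P[\vT]\,\E_P[X_1]<\infty$, which holds since $X_1,\vT\in\mathcal{L}^1(P)$; hence $S_t-t\vT\E_P[X_1]\in\mathcal{L}^1(P)$ for every $t\in\R_+$, so all objects are genuinely members of $\mathcal{L}^1(P)$ and every conditioning manipulation below is legitimate.

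For the implication $(1)\Rightarrow(2)$ I would argue as follows. If $\agp$ is a $P$-CMPP$(\vT,P_{X_1})$, then by definition $\cnp$ is a $P$-MPP$(\vT)$, so by \cite{lm1}, Proposition 4.4 it is a $P_{\theta}$-PP$(\theta)$ for $P_{\vT}$-a.a. $\theta\in\vY$; combining this with conditions $(\mathrm{a1})$, $(\mathrm{a2})$ and the reduction Lemma \ref{11}, the aggregate process $\agp$ is a $P_{\theta}$-CPP$(\theta,P_{X_1})$ for $P_{\vT}$-a.a. $\theta\in\vY$ (cf. \cite{lm3}). For an ordinary compound Poisson process the centered process $\{S_t-\theta t\,\E_{P_{\theta}}[X_1]\}_{t\in\R_+}$ is a $P_{\theta}$-martingale, since $\agp$ has $P_{\theta}$-stationary independent increments and $\E_{P_{\theta}}[S_t]=\theta t\,\E_{P_{\theta}}[X_1]$ (cf. e.g. \cite{sch}). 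Using $\E_{P_{\theta}}[X_1]=\E_P[X_1]$ for $P_{\vT}$-a.a. $\theta$ (\cite{lm3}) and substituting $\theta=\vT$, I would then invoke \cite{lm1}, Lemma 4.6 (together with \cite{lm1err}) in the direction that lifts a $P_{\vT}$-a.e. family of $P_{\theta}$-martingales to a martingale under $P$, concluding that $\{S_t-t\vT\E_P[X_1]\}_{t\in\R_+}$ is a martingale in $\mathcal{L}^1(P)$. (Alternatively $(1)\Rightarrow(2)$ can be shown directly: by Remark \ref{16} the process $\agp$ has $P$-conditionally stationary independent increments, whence $\E_P[S_t-S_s\mid\mathcal{F}_s]=\E_P[S_t-S_s\mid\vT]$; the conditional analogue of Wald's identity, condition $(\mathrm{a2})$, and $\E_P[N_t-N_s\mid\vT]=(t-s)\vT$ then give $\E_P[S_t-S_s\mid\vT]=(t-s)\vT\E_P[X_1]$, which is exactly the martingale property.)

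For the implication $(2)\Rightarrow(1)$, assume $\{S_t-t\vT\E_P[X_1]\}_{t\in\R_+}$ is a martingale in $\mathcal{L}^1(P)$. Lemma \ref{26} then yields that $\{N_t-t\vT\}_{t\in\R_+}$ is an $\mathcal{F}^{\cnp,\vT}$-martingale in $\mathcal{L}^1(P)$. Applying the martingale characterization of mixed Poisson processes, \cite{lm1}, Theorem 4.10 (together with \cite{lm1err}), to the counting process $\cnp$ gives that $\cnp$ is a $P$-MPP$(\vT)$. Since by hypothesis $(\cnp,\csp)$ is a risk process satisfying $(\mathrm{a1})$ and $(\mathrm{a2})$, the aggregate process $\agp$ induced by $(\cnp,\csp)$ with $\cnp$ a $P$-MPP$(\vT)$ is, by definition, a $P$-CMPP$(\vT,P_{X_1})$, which is $(1)$.

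The step I expect to be the main obstacle is the transfer of the martingale property across the disintegration --- both the lifting from the $P_{\theta}$-level to $P$ in $(1)\Rightarrow(2)$ and the translation between the $\agp$- and $\cnp$-processes packaged in Lemma \ref{26} for $(2)\Rightarrow(1)$ --- where one must control the $P_{\vT}$-null exceptional sets uniformly enough to pass to the conclusion under $P$ (this is precisely the point at which \cite{lm1err} amends \cite{lm1}, Lemma 4.6). Note that the genuinely nontrivial content lives entirely in the $\cnp$-process and is supplied by the MPP characterization \cite{lm1}, Theorem 4.10; the compound layer is handled by the Wald-type identity encoded in Lemma \ref{25} and by the elementary fact that a compound Poisson process, having independent stationary increments with linear mean, is centered into a martingale.
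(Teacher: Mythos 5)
Your proposal is correct, and its overall architecture coincides with the paper's: both directions are run through the regular conditional probability $\{P_{\theta}\}_{\theta\in\vY}$, and the converse direction pivots on Lemma \ref{26} exactly as in the paper. The differences are in which auxiliary results carry the sub-steps. For $(i)\Rightarrow(ii)$ the paper does not pass through the statement ``$\agp$ is a $P_{\theta}$-CPP$(\theta,P_{X_1})$'': it instead uses Remark \ref{16} to get that $\agp$ has $P$-conditionally independent increments and then applies \cite{lm1}, Proposition 4.8 (with \cite{lm1err}) to obtain that $\{S_t-\E_{P_{\theta}}[S_t]\}_{t\in\R_+}$ is a $P_{\theta}$-martingale, before lifting with \cite{lm1}, Lemma 4.6; your route via Lemma \ref{11} and the classical fact that a centered compound Poisson process is a martingale is an equally valid reduction, and arguably more transparent, though it silently uses $\E_{P_{\theta}}[S_t]=\theta t\E_{P_{\theta}}[X_1]=\theta t\E_P[X_1]$ and $\E_P[S_t\mid\vT]=t\vT\E_P[X_1]$ $P$-a.s.\ to match the centerings --- identifications the paper also leaves largely implicit. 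For $(ii)\Rightarrow(i)$ you invoke \cite{lm1}, Theorem 4.10 as a black box to pass from the $\mathcal{F}^{\cnp,\vT}$-martingale property of $\{N_t-t\vT\}_{t\in\R_+}$ to $\cnp$ being a $P$-MPP$(\vT)$; the paper deliberately unwinds that theorem into its ingredients --- first $\E_P[N_t\mid\vT]=t\vT$, then \cite{lm1}, Lemma 4.6 to descend to $\{N_t-t\theta\}_{t\in\R_+}$ being a $P_{\theta}$-martingale, then Watanabe's characterization (\cite{sch}, Theorem 2.3.4) to get $\cnp$ a $P_{\theta}$-PP$(\theta)$, and finally \cite{lm1}, Proposition 4.4. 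Your shortcut is logically sound (the standing hypotheses here --- $\vT$ positive and $P$-integrable, existence of the consistent r.c.p., $\cnp$ a counting process --- are those of \cite{lm1}, Theorem 4.10 with its erratum), and it makes vivid your closing observation that the counting-process core is exactly the earlier MPP characterization while the compound layer is Lemmas \ref{25}--\ref{26}; what the paper's longer route buys is self-containedness at the level of the primitive lemmas it needs anyway for Theorem \ref{27}, where the descent to the $P_{\theta}$-level statements $(iii)$ and $(v)$ must be exhibited explicitly rather than hidden inside the cited theorem.
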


\begin{proof}
Ad $(i)\Longrightarrow(ii)$:
If $(i)$ holds, then $\cnp$ is a $P$-MPP($\vT$). Moreover, %condition \textnormal{(a2)} together with
the $P$-integrability of $X_1$ and $\vT$ yields
by Wald's identities (cf. e.g. \cite{sch}, Lemma 5.2.10) that $\E_P[S_t]=t\E_{P}[\vT]\E_P[X_1]<\infty$ for each $t\in\R_+$.

Since $\cnp$ is a $P$-MPP($\vT$), the \agpw $\agp$ has $P$-conditionally independent increments by Remark \ref{16}. Then \cite{lm1}, Proposition 4.8 together with \cite{lm1err} yields that the process $\{S_t-\E_{P_{\theta}}[S_t]\}_{t\in\R_+}$ is a martingale in $\mathcal{L}^1(P_{\theta})$ for 
$P_{\vT}$-a.a. $\theta\in\vY$, implying along with the $P$-integrability of the random variables $S_t$, $t\in\R_+$, that 
$\{S_t-\E_{P}[S_t\mid\vT]\}_{t\in\R_+}$ is a martingale in $\mathcal{L}^1(P)$ (see \cite{lm1}, Lemma 4.6 together with \cite{lm1err}); 
hence $(ii)$ follows.

Ad $(ii)\Longrightarrow(i)$: If $(ii)$ holds, then according to Lemma \ref{26}, the process 
$\{N_t-t\vT\}_{t\in\R_+}$ is a $\mathcal{F}^{\cnp,\vT}$-martingale in $\mathcal{L}^1(P)$.
%\mgd{\delq{OUT}IN} 
%\delq{Following similar argument with those in the proof of Claim 1 of Lemma \ref{26}, we get the equality $\{N_t-t\vT\}_{t\in\R_+}=\{N_t-\E_P[N_t\mid\vT]\}_{t\in\R_+}$;}
It then follows by the martingale 
property of $\{N_t-t\vT\}_{t\in\R_+}$ that for each $t\in\R_+$ the equality $\E_P[N_t\mid\vT]=t\vT$ holds $P\restr\s(\vT)$-a.s., implying
that $\{N_t-\E_P[N_t\mid\vT]\}_{t\in\R_+}$ is a is a $\mathcal{F}^{\cnp,\vT}$-martingale in $\mathcal{L}^1(P)$; 
hence we can apply \cite{lm1}, Lemma 4.6, in order to deduce that the process $\{N_t-t\theta\}_{t\in\R_+}$ is a 
$\mathcal{F}^{\cnp,\vT}$-martingale in $\mathcal{L}^1(P_{\theta})$ 
%\mgd{$R_{\vT}\triangleright\vY$}
for $P_{\vT}$-a.a. $\theta\in{\vY}$. 
The latter together with e.g. \cite{sch}, Theorem 2.3.4, yields that $\cnp$ is a $P_{\theta}$-PP($\theta$); hence it is a 
$P$-MPP($\vT$) by \cite{lm1}, Proposition 4.4, implying that $\agp$ is a $P$-CMPP($\vT,P_{X_1}$).
%, since $P$ satisfies conditions \textnormal{(a1)} and \textnormal{(a2)}. 
This completes the proof.
\end{proof}
%\m 

%-------------------------------------------------------------------------------------------------------------------------------------
\begin{comment}
The following result is an immediate consequence of Theorem \ref{17}.

\begin{cor}\label{18}
%\mg{18}
Assume that $\theta_0\in(0,\infty)$ and that the random variable $X_1$ is $P$-integrable. Then the \agpw $\agp$ is a $P$-CPP with parameters $\theta_0$ and $P_X$ if and only if $\{S_t-t\theta_0\E_{P}[X_1]\}_{t\in\R_+}$ is a 
$\mathcal{F}^{\agp}$)-martingale in $\mathcal{L}^1(P)$. 
\end{cor} 

{\Ddown}

\adc{{\bf - QUESTION:} Should we make a comment that the above corollary is known (cf. e.g. [DH], page 270-271 where ``...the linearity of the premiums implies that under $Q$ the risk process $\{S_t:0\leq t\leq T\}$ is still a compound Poisson process'')?}

\adc{OR should we omit it since it is probably known in a more general setting, like L$\mathrm{\acute{e}}$vy processes (cf. e.g. [EFF], page 398: linear deterministic compensator implies CPP)?}

{\Dup}
\end{comment}
%--------------------------------------------------------------------------------------------------------------------------------
\smallskip

Summarizing up we obtain:

\begin{thm}\label{27}
%\mg{27}
Let $\agp$ be an \agpw induced by a risk process $(\cnp,\csp)$ and assume that 
conditions $(\mathrm{a1})$ and $(\mathrm{a2})$ are satisfied by $(P,\csp,\cip,\vT)$, and that 
the random variables $X_1$ and $\vT$ are $P$-integrable. 
\noindent The following statements are equivalent:
\begin{enumerate}
\item 
The \agpw $\agp$ is a $P$-CMPP($\vT,P_{X_1}$);
\item 
the process $\{S_t-t\vT\E_{P}[X_1]\}_{t\in\R_+}$ is a martingale in $\mathcal{L}^1(P)$;
\item
the process $\{S_t-t\theta\E_{P_{\theta}}[X_1]\}_{t\in\R_+}$ is a 
martingale in $\mathcal{L}^1(P_{\theta})$ 
%\mgd{$\bullet$}
for $P_{\vT}$-a.a. $\theta\in\vY$.
\item 
the process $\{N_t-t\vT\}_{t\in\R_+}$ is a $\mathcal{F}^{N,\vT}$-martingale 
%\mgd{$\bullet$}
in $\mathcal{L}^1(P)$;
\item 
the process $\{N_t-t\theta\}_{t\in\R_+}$ is a 
$\mathcal{F}^{\cnp,\vT}$-martingale in $\mathcal{L}^1(P_{\theta})$ 
%\mgd{$\bullet$}
for $P_{\vT}$-a.a. $\theta\in\vY$.
\end{enumerate}
\end{thm}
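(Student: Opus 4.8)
The plan is to establish the cyclic chain of equivalences $(i)\Leftrightarrow(ii)$, $(ii)\Leftrightarrow(iii)$, $(i)\Leftrightarrow(iv)$ (or $(iv)\Leftrightarrow(v)$), and $(iv)\Leftrightarrow(v)$, thereby linking all five statements. The guiding principle is that the pair $\{(ii),(iv)\}$ describing martingale properties under $P$ at the level of the aggregate and counting processes corresponds, via the regular conditional probability $\{P_\theta\}_{\theta\in\vY}$ consistent with $\vT$, to the pair $\{(iii),(v)\}$ describing the analogous properties $P_\theta$-almost surely; while statement $(i)$ is the structural CMPP property, already tied to $(ii)$ by the preceding Theorem~\ref{17}.

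The first and most economical observation is that $(i)\Leftrightarrow(ii)$ is precisely the content of Theorem~\ref{17}, so this equivalence may simply be invoked. Next I would handle $(ii)\Leftrightarrow(iii)$, which is the ``vertical'' transfer between the global measure $P$ and the family $\{P_\theta\}$: the implication from $P$-martingality to $P_\theta$-martingality for $P_{\vT}$-a.a.\ $\theta$ is exactly \cite{lm1}, Lemma~4.6 together with \cite{lm1err} (applied with $\E_{P_\theta}[X_1]=\E_P[X_1]$ for $P_{\vT}$-a.a.\ $\theta$, by \cite{lm3}), and the converse direction (assembling the fibrewise martingale property back into a $P$-martingale) uses the same Lemma~4.6 in reverse, the $P$-integrability of $S_t$ secured as in the proof of Theorem~\ref{17} via Wald's identities. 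Symmetrically, $(iv)\Leftrightarrow(v)$ is the same vertical transfer applied now to the counting process: one moves between $\{N_t-t\vT\}$ being an $\mathcal{F}^{\cnp,\vT}$-martingale in $\mathcal{L}^1(P)$ and $\{N_t-t\theta\}$ being an $\mathcal{F}^{\cnp,\vT}$-martingale in $\mathcal{L}^1(P_\theta)$ for $P_{\vT}$-a.a.\ $\theta$, again by \cite{lm1}, Lemma~4.6.

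The structural step is $(ii)\Leftrightarrow(iv)$, i.e.\ that the aggregate-level martingale property is equivalent to the counting-level one. The forward direction $(ii)\Rightarrow(iv)$ is exactly Lemma~\ref{26}. For the converse $(iv)\Rightarrow(i)$ I would follow the ``Ad $(ii)\Rightarrow(i)$'' argument already laid out in the proof of Theorem~\ref{17}: the martingale property of $\{N_t-t\vT\}$ gives $\E_P[N_t\mid\vT]=t\vT$ $P\restr\s(\vT)$-a.s., whence $\{N_t-\E_P[N_t\mid\vT]\}$ is an $\mathcal{F}^{\cnp,\vT}$-martingale, so by \cite{lm1}, Lemma~4.6 the process $\{N_t-t\theta\}$ is a $P_\theta$-martingale for $P_{\vT}$-a.a.\ $\theta$ (this already yields $(iv)\Rightarrow(v)$); then \cite{sch}, Theorem~2.3.4 identifies $\cnp$ as a $P_\theta$-PP($\theta$), hence a $P$-MPP($\vT$) by \cite{lm1}, Proposition~4.4, so $\agp$ is a $P$-CMPP($\vT,P_{X_1}$), giving $(i)$.

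The main obstacle, and the place where Lemma~\ref{25} does the real work, is the equivalence at the heart of Lemma~\ref{26} between the displayed integral identities for $S_t-t\theta\E_{P_\theta}[X_1]$ and those for $N_t-t\theta$: the step passing from $\int_A S_t\,dP_\theta$ to $\int_A N_t\E_{P_\theta}[X_1]\,dP_\theta$ relies on the conditional independence of $\cnp$ and $\csp$ under $P_\theta$ and on Wald-type summation, and one must carefully track that all identities hold on the common $\s$-algebra $\mathcal{F}_u^{\cnp,\vT}\sq\mathcal{F}_u$ and outside a single $P_{\vT}$-null set. Once that transfer is in hand the rest of the chain is bookkeeping, so I would organize the writeup so that the cited results $(i)\Leftrightarrow(ii)$, Lemma~\ref{25}, Lemma~\ref{26}, and \cite{lm1}, Lemma~4.6 are assembled into the cycle, with the verification that $\E_{P_\theta}[X_1]>0$ (needed to cancel the factor $\E_{P_\theta}[X_1]$ when going from the $S$-identity to the $N$-identity) noted explicitly, since $X_1$ is positive and $P$-integrable.
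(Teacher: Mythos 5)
Your proposal is correct and follows the paper's proof essentially step for step: $(i)\Leftrightarrow(ii)$ by Theorem \ref{17}, $(ii)\Rightarrow(iv)$ by Lemma \ref{26}, $(iv)\Rightarrow(v)$ by \cite{lm1}, Lemma 4.6 with \cite{lm1err}, and $(v)\Rightarrow(i)$ via \cite{sch}, Theorem 2.3.4 and \cite{lm1}, Proposition 4.4, which closes the cycle. The only divergence is minor: for $(ii)\Leftrightarrow(iii)$ the paper simply cites \cite{lm3}, Proposition 6.2, whereas you re-derive it from \cite{lm1}, Lemma 4.6 together with the identity $\E_{P_{\theta}}[X_1]=\E_P[X_1]$ for $P_{\vT}$-a.a. $\theta$ --- the same machinery the paper itself uses inside the proof of Lemma \ref{26} --- though your appeal to Wald's identities for the $P$-integrability of $S_t$ in the direction $(iii)\Rightarrow(ii)$ should instead be argued from the fibrewise martingale property and condition (d2), since Wald's identities as used in Theorem \ref{17} presuppose statement $(i)$.
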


\begin{proof}

The equivalences $(i)\Longleftrightarrow(ii)$ and $(ii)\Longleftrightarrow(iii)$
%\marginpar{\adc{10.04.20\;\;this proof}\\\adc{must remain}\\\adc{exactly as it is}} 
follow by Theorem \ref{17} and \cite{lm3}, Proposition 6.2, respectively. The implication $(ii)\Longrightarrow (iv)$ 
is a consequence of Lemma \ref{26}, while the implication $(iv)\Longrightarrow(v)$ follows by \cite{lm1}, Lemma 4.6 along  with \cite{lm1err}. Finally, according to e.g. \cite{sch}, Theorem 2.3.4, the 
%\mgd{$\bullet$}
statement $(v)$ is equivalent to the fact that $\cnp$ is a $P_{\theta}$-PP($\theta$) for $P_{\vT}$-a.a. $\theta\in\arch{\vY}$, 
%\mgd{$R_{\vT}\triangleright\vY$}
implying that $\cnp$ is a $P$-MPP($\vT$) by \cite{lm1}, Proposition 4.4; hence $\agp$ is a $P$-CMPP($\vT,P_{X_1}$). This completes the proof. 
%The equivalences $(i)\Longleftrightarrow(ii)$ and $(ii)\Longleftrightarrow(iii)$ follow by Theorem \ref{17} and \cite{lm3}, Proposition 6.2, respectively. The implication $(ii)\Longrightarrow (iv)$ is a consequence of Lemma \ref{26}, while the implication $(iv)\Longrightarrow(v)$ follows by \cite{lm1}, Lemma 4.6 along  with \cite{lm1err}. Finally, \delq{according to e.g.\cite{sch}, Theorem 2.3.4, the} 
%statement $(v)$ is equivalent to the fact that $\cnp$ is a $P_{\theta}$-PP($\theta$) for $P_{\vT}$-a.a. $\theta\in\arch{\vY}$ 
%\arch{(cf. e.g. \cite{sch}, Theorem 2.3.4), which equivalently yields by \cite{lm1}, Proposition 4.4 that $\cnp$ is a $P$-MPP($\vT$)} \delq{, implying that $\cnp$ is a $P$-MPP($\vT$) by \cite{lm1}, Proposition 4.4}; hence $\agp$ is a $P$-CMPP($\vT,P_{X_1}$). This completes the proof. 
\end{proof}

The following consequence of Theorem \ref{27} contains the S. Watanabe's martingale characterization of the Poisson 
process with parameter $\theta_0>0$ (see Watanabe \cite{wa}). %\arch{, see \cite{wa}.}

\begin{cor}\label{28}
%\arch{\Ddown}
%\noindent\arch{
%Let $\agp$ be 
%\mg{28}
%an \agpw induced by a risk process $(\cnp,\csp)$ and assume that 
%conditions $(\mathrm{a1})$ and $(\mathrm{a2})$ are satisfied by $(P,\csp,\cip,\vT)$, and that the random variables $X_1$ is $P$-integrable. }
%\noindent\arch{instead of}
%\noindent 
Assume that the \agpw $\agp$ is induced by the risk process $(\cnp,\csp)$ and that the random variable $X_1$ is $P$-integrable.
%\noindent\arch{\Dup} 
\noindent For $\theta_0>0$ the following 
%\mgd{$\bullet\times 2$}
statements are equivalent:
\begin{enumerate}
\item
The \agpw process $\agp$ is a $P$-CPP($\theta_0,P_{X_1}$);
\item
the process $\{S_t-t\theta_0\E_P[X_1]\}_{t\in\R_+}$ is a $\mathcal{F}^{\agp}$-martingale in $\mathcal{L}^1(P)$;
\item 
the process $\{N_t-t\theta_0\}_{t\in\R_+}$ is a $\mathcal{F}^{\cnp}$-martingale in $\mathcal{L}^1(P)$;
\item
the \cnpw  $\cnp$ is a $P$-PP($\theta_0$).
\end{enumerate}
\end{cor}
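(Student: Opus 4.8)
The plan is to obtain the corollary as the degenerate special case of Theorem \ref{27} in which the mixing parameter is the constant $\theta_0$. Concretely, I would set $\vT:=\theta_0\cdot\chi_{\vO}$, so that $P_{\vT}$ is the Dirac measure at $\theta_0$; since $\theta_0>0$ this gives $P_{\vT}\bigl((0,\infty)\bigr)=1$, as the standing assumption on $\vT$ requires. For such a $\vT$ the $\sigma$-algebra $\s(\vT)=\{\e,\vO\}$ is trivial, so the r.c.p. $\{P_{\theta}\}_{\theta\in\vY}$ of $P$ over $P_{\vT}$ consistent with $\vT$ may be taken with $P_{\theta_0}=P$, and the phrase ``for $P_{\vT}$-a.a. $\theta\in\vY$'' collapses to the single value $\theta=\theta_0$ with $P_{\theta_0}=P$ and $\E_{P_{\theta_0}}[X_1]=\E_P[X_1]$.

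Next I would check that all hypotheses of Theorem \ref{27} hold for the quadruplet $(P,\cip,\csp,\vT)$. Integrability of $X_1$ is assumed, and $\vT$ is trivially $P$-integrable since $\E_P[\vT]=\theta_0<\infty$. Condition $(\mathrm{a2})$ holds automatically, because a constant random variable is $P$-independent of everything, in particular of the sequence $\csp$. For condition $(\mathrm{a1})$ I would use that conditioning on the trivial $\s$-algebra $\s(\vT)$ reduces $P$-conditional independence given $\vT$ to ordinary $P$-independence; as $(\cnp,\csp)$ is a $P$-risk process, the processes $\cip$ and $\csp$ (equivalently $\cnp$ and $\csp$, by Lemma \ref{11}$(i)$) are $P$-mutually independent, so $(\mathrm{a1})$ is satisfied. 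With these verifications in hand, Theorem \ref{27} applies.

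It then remains only to translate the five equivalent statements of Theorem \ref{27} into the four of the corollary. Since $\s(\vT)$ is trivial one has the exact equalities of filtrations $\mathcal{F}_t=\s\bigl(\mathcal{F}^{\agp}_t\cup\s(\vT)\bigr)=\mathcal{F}^{\agp}_t$ and $\mathcal{F}^{\cnp,\vT}_t=\s\bigl(\mathcal{F}^{\cnp}_t\cup\s(\vT)\bigr)=\mathcal{F}^{\cnp}_t$ for every $t\in\R_+$, while $t\vT\E_P[X_1]=t\theta_0\E_P[X_1]$ and $t\vT=t\theta_0$ pointwise. Hence statement $(ii)$ of Theorem \ref{27} becomes statement $(ii)$ of the corollary and statement $(iv)$ of Theorem \ref{27} becomes statement $(iii)$ of the corollary. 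By the very definitions of CMPP and CPP, a $P$-CMPP$(\vT,P_{X_1})$ with $\vT$ degenerate at $\theta_0$ is exactly a $P$-CPP$(\theta_0,P_{X_1})$, giving statement $(i)$; and since a $P$-MPP$(\vT)$ with degenerate $\vT$ is a $P$-PP$(\theta_0)$, the equivalence ``$\cnp$ is a $P$-MPP$(\vT)$'' $\Longleftrightarrow$ ``$\agp$ is a $P$-CMPP$(\vT,P_{X_1})$'' (built into the proof of Theorem \ref{27} via \cite{lm1}, Proposition 4.4) yields statement $(iv)$. Thus all four statements are equivalent, and the Watanabe characterization is the equivalence $(iii)\Longleftrightarrow(iv)$.

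The only delicate point, and the step I expect to be the main obstacle, is the verification of $(\mathrm{a1})$ and $(\mathrm{a2})$: the corollary merely assumes that $\agp$ is induced by a risk process and does not postulate these conditions, whereas Theorem \ref{27} requires them. The resolution is precisely the observation that for constant $\vT$ the conditional-independence hypotheses degenerate into the unconditional independence already encoded in the notion of a risk process. Once this is settled, the remainder is a purely notational translation of Theorem \ref{27}, requiring no further estimates.
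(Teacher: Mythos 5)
Your proof is correct and follows essentially the same route as the paper: both obtain the corollary by instantiating Theorem~\ref{27} with a degenerate parameter $\vT$ (i.e. $P_{\vT}=\delta_{\theta_0}$), for which $P_{\theta_0}=P$ and conditions (a1), (a2) hold automatically. The only cosmetic difference is that you pass through statements $(ii)$ and $(iv)$ of Theorem~\ref{27}, identifying the filtrations $\mathcal{F}$ and $\mathcal{F}^{\cnp,\vT}$ with $\mathcal{F}^{\agp}$ and $\mathcal{F}^{\cnp}$ via the exact constancy of $\vT$, whereas the paper passes through statements $(iii)$ and $(v)$, collapsing the ``$P_{\vT}$-a.a.\ $\theta$'' quantifier to the single value $\theta=\theta_0$.
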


\begin{proof}
Let $\vT$ be a $P$-a.s. positive random variable on $\vO$ such that $P_{\vT}=\delta_{\theta_0}$. Then $(P,\csp,\cip,\vT)$ satisfies  conditions $(\mathrm{a1})$ and $(\mathrm{a2})$, $P=P_{\theta_0}$ by 
%\mgd{$\bullet\times 6$} %the 
property $(\mathrm{d2})$, and statement $(i)$ is equivalent to statement
%\mg{28a}
\begin{equation}\label{28a}
\mbox{the aggregate process $\agp$ is a $P$-CMPP($\vT,P_{X_1}$)}.%\;\; \agp\;\; \mbox{is a}\;\; P-CMPP(\vT,P_{X_1}).
\end{equation} 
Thus, applying Theorem \ref{27}, we obtain that $(i)$ is equivalent to each of its statements $(iii)$ and $(v)$. But statement $(iii)$ of 
Theorem \ref{27} says that there exists a $P_{\vT}$-null set $L\in\mf{B}(\vY)$ such that the process 
$\{S_t-\theta{t}\E_{P_{\theta}}[X_1]\}_{t\in\R_+}$ is an $\mathcal{F}^{\agp}$-martingale in $\mathcal{L}^1(P)$ for any $\theta\notin{L}$. 
Because %\arch{Since} 
$P(\{\vT\not=\theta_0\})=0$, we get that $L=\vY\setminus\{\theta_0\}$ and $\theta_0\notin{L}$; hence statement $(ii)$ is equivalent to statement $(iii)$ of Theorem \ref{27}. In the same way, statement $(iii)$ is equivalent to the statement $(v)$ of Theorem \ref{27}. 
The equivalence $(i)\Longleftrightarrow(iv)$ follows by the definitions. This completes the proof.
\end{proof}

\begin{rem}\label{30}
%\marginpar{\adc{new\;\;$\Downarrow$}} 
An immediate consequence of Corollary \ref{28} is Proposition 4.1, $(i)\Longleftrightarrow(ii)$, of \cite{mt3}, according to which if $\agp$ is a compound renewal process with parameters $\mathbf{K}(\theta)$ and $P_{X_1}$,  where $\theta:=\frac{1}{\E_P[X_1]}$, such that $X_1$ and $W_1$ are $P$-integrable (see \cite{mt3} for the definitions), then the processes $\{Z_t\}_{t\in\R_+}$ with 
\[Z_t:=\agp_t-t\frac{\E_P[X_1]}{\E_P[W_1]}\quad\mbox{for all}\quad t\in\R_+
\]
is a $\mathcal{F}^{\agp}$-martingale in $\mathcal{L}^1(P)$ if and only if the \cnpw $\cnp$ is a $P$-PP($\theta$). The proof of Proposition 4.1, $(i)\Longleftrightarrow(ii)$ of \cite{mt3} is totally different than ours. %\marginpar{\adc{new\;\;$\Uparrow$}}
\end{rem}

\begin{rem}\label{29}
%\mg{29}%\\\arch{IN}\delq{OUT}}
If, for $P_{\vT}$-a.a. $\theta\in\vY$, the distribution of $X_1$ under $P_{\theta}$ is degenerate at 1, then Theorem \ref{27} reduces 
%\mgd{\arch{IN}\delq{OUT}}
to the main result of \cite{lm1} (see \cite{lm1}, Theorem 4.10, assertions $(ii)$ to $(vi)$ along with \cite{lm1err}).

In fact, if $(P_{\theta})_{X_1}=\delta_1$ for $P_{\vT}$-a.a. $\theta\in\vY$, then $P_{X_1}=\delta_1$ and there exists a $P$-null and $P_{\theta}$-null set $\vO_X\in\mathcal{F}$ such that $X_n(\omega)=1$ for all $\omega\in\vO\setminus\vO_X$, implying that there exists a $P$-null and $P_{\theta}$-null set $\vO_{\infty}\in\mathcal{F}$ such that 
\[
\sum_{k=1}^{\infty}X_k=\infty
\]
for all $\omega\in\vO\setminus\vO_{\infty}$. Define $\vO_S:=\vO_X\cup\vO_{\infty}$. It then follows tat $S$ is a $P$- and $P_{\theta}$-\cnpw with exceptional null set $\vO_S$ for $P_{\vT}$-a.a. $\theta\in\vY$. Therefore, Theorem \ref{27} reduces to the main result of \cite{lm1}.  
\end{rem}

\begin{comment}
\section*{Acknowledgements}
%\marginpar{\arch{D27:AAPstyle\\$+$thanks}}
D.P.L. would like to aknowledge that a part of this
work was conducted at the {\sl Department of Statistics and Insurance Science in the University of Piraeus}.  
D.P.L. is also indebted to the Public Benefit Foundation 
{\sl Alexander S. Onassis}, which supported this research, under the Programme 
of Scholarships for Hellenes.
\end{comment}

\bigskip

\bigskip

\bigskip

\bigskip

{\small
\begin{tabular}{lr}
{\tt D.P. Lyberopoulos} & {\tt N.D. Macheras}\\
{\rm Retail Price Indices Section} & {\rm Dept. of Statistics and Insurance Science}\\
{\rm Hellenic Statistical Authority (ELSTAT), Greece}& {\rm University of Piraeus, Greece}\\
%{\rm University of Piraeus, 80 Karaoli and Dimitriou street}&$\;$\\
%{\rm 185 34 Piraeus, Greece}&$\;$\\
{\rm E-mail:} {\tt d.lymperopoulos@statistics.gr} & {\rm E-mail:} {\tt macheras@unipi.gr}\\
\qquad\quad\;\;{\tt dilyber@webmail.unipi.gr}&
\end{tabular}
}

\end{document}